\def\bt{\begin{thm}}
\def\et{\end{thm}}
\def\bl{\begin{lem}}
\def\el{\end{lem}}
\def\bd{\begin{defi}}
\def\ed{\end{defi}}
\def\bc{\begin{cor}}
\def\ec{\end{cor}}
\def\bp{\begin{proof}}
\def\ep{\end{proof}}
\def\br{\begin{rem}}
\def\er{\end{rem}}
\theoremstyle{plain}
\newtheorem{thm}{Theorem}[section]
\newtheorem{cor}[thm]{Corollary}
\newtheorem{lem}[thm]{Lemma}
\newtheorem{proposition-principale}[thm]{Proposition principale}
\newtheorem{thm-principal}{Th\'eor\`eme principal}[section]
\theoremstyle{definition}
\newtheorem{defi}[thm]{Definition}
\newtheorem{rem}[thm]{Remark}
\numberwithin{equation}{section}
\def\C{\mathbf{C}}
\def\Z{\mathbf{Z}}
\def\k{\mathbf{k}}
\def\Aut{{\sf{Aut}}}
\def\GL{{\rm{GL}}}
\def\PGL{{\rm{PGL}}}
\def\P{\mathbb{P}}
\def\Pic{\rm{Pic}}
\def\NS{\rm{NS}}
\def\Ind{{\rm{Ind}}}
\newcommand{\bthm}{\begin{thm}}
\newcommand{\ethm}{\end{thm}}
\newcommand{\bstp}{\begin{stp}}
\newcommand{\estp}{\end{stp}}
\newcommand{\blemma}{\begin{lemma}}
\newcommand{\elemma}{\end{lemma}}
\newcommand{\bprop}{\begin{prop}}
\newcommand{\eprop}{\end{prop}}
\newcommand{\bpf}{\begin{pf}}
\newcommand{\epf}{\end{pf}}
\newcommand{\bdefn}{\begin{defn}}
\newcommand{\edefn}{\end{defn}}
\newcommand{\brk}{\begin{rmrk}}
\newcommand{\erk}{\end{rmrk}}
\newcommand{\bcrl}{\begin{crl}}
\newcommand{\ecrl}{\end{crl}}
\title{Constraints on automorphism groups of higher dimensional manifolds}
\author{Turgay Bayraktar}
\author{Serge Cantat}
\date{\today}
\address{Mathematics Department, Johns Hopkins University 21218 Maryland, USA}
\address{D\'epartement de Math\'ematiques et Applications
UMR 8553 du CNRS
Ecole Normale Sup\'erieure, 45 rue d'Ulm, Paris, France}
\email{bayraktar@jhu.edu \and serge.cantat@univ-rennes1.fr}
\keywords{automorphism, rational manifold, topological entropy}
\subjclass[2000]{37F10, 32H50, 14E09}
\begin{document}

\begin{abstract}
 In this note, we prove, for instance, that the automorphism group of a rational manifold $X$ which is obtained from $\P^k(\C)$ by a finite sequence of blow-ups along smooth centers of dimension at most $r$ with $k>2r+2$ has finite image in $GL(H^*(X,\Z))$. In particular, every holomorphic automorphism $f:X\to X$ has zero topological entropy. 
\end{abstract}

\maketitle
%\tableofcontents

 %%% %%% %%% %%% %%% %%% %%%
\section{Introduction}
 %%% %%% %%% %%% %%% %%% %%% %%%

\subsection{Dimensions of indeterminacy loci}
Recall that a rational map admitting a rational inverse is called birational. Birational transformations are, in general, not defined everywhere. The domain of definition of a birational map $f\colon M \to N$ is the largest Zariski-open subset on which $f$ is locally a well defined morphism. Its complement is the indeterminacy set $\Ind(f)$; its codimension 
is always larger than, or equal to, $2$. The following statement shows that the dimension 
of $\Ind(f)$ and $\Ind(f^{-1})$ can not be too small simultaneously unless $f$ is an automorphism. %This result
%is inspired by a nice argument of Nessim Sibony concerning the degrees
%of regular automorphisms of the complex affine space $\C^k$ (see \cite{Sibony}). 
The proof of this result follows a nice argument of Nessim Sibony
concerning the degrees of regular automorphisms of the complex
space $\C^k$ (see  \cite{Sibony}) ; this idea was explained to us by an
anonymous referee (compare \cite{BC}). It may be considered as
an extension of a theorem due to Matsusaka and Mumford (see \cite{MaMu}, and \cite{KSC}, Exercise 5.6).

\begin{thm}\label{thm:sib}
Let $\k$ be a field. 
Let $M$ be a smooth connected projective variety defined over $\k$. Let $f$ 
be a birational transformation of $M$. Assume that the following two properties are
satisfied.
\begin{itemize}
\item[(i)] the Picard number of $M$ is equal to $1$;
\item[(ii)] the indeterminacy sets of $f$ and its inverse satisfy
\[
\dim(\Ind(f))+\dim(\Ind(f^{-1}))<\dim(M)-2.
\]
\end{itemize}
Then $f$ is an automorphism of $M$.
\end{thm}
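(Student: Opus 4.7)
I would argue by contradiction: suppose $f$ satisfies (i) and (ii) but is not an automorphism of $M$. Since hypothesis (ii) is symmetric under $f \leftrightarrow f^{-1}$, after possibly swapping them I may assume that $\Ind(f^{-1})$ is nonempty. Then $f$ contracts some irreducible hypersurface $D\subset M$, with $f(D)\subseteq \Ind(f^{-1})$, so $\dim f(D)\leq q := \dim\Ind(f^{-1})$. Set $p := \dim\Ind(f)$ and $n := \dim M$; condition (ii) reads $p+q\leq n-3$. Using (i), fix an ample class $H$ generating $\NS(M)_{\mathbf{Q}}$ and write $D\equiv \beta H$ and $f^{*}H\equiv dH$ with $\beta,d>0$ rational.

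The plan is to produce a single intersection number on a resolution of $f$ which equals $0$ for one reason and is strictly positive for another. Choose a smooth birational model $\pi\colon \widetilde M\to M$ resolving $\Ind(f)$, so that $\sigma:=f\circ\pi$ is a morphism $\widetilde M\to M$; let $\widetilde D\subset\widetilde M$ be the strict transform of $D$. Since $\pi_{*}(\sigma^{*}H)=f^{*}H\equiv dH$, the class $\sigma^{*}H-d\pi^{*}H$ is $\pi$-exceptional; write $\sigma^{*}H=d\pi^{*}H-E$ with $E$ a $\pi$-exceptional divisor whose support lies over $\Ind(f)$. The key intersection number is
\[
N := (\sigma^{*}H)^{q+1}\cdot (\pi^{*}H)^{n-q-2}\cdot \widetilde D.
\]
On one side, $\sigma|_{\widetilde D}$ factors through $f(D)$, of dimension at most $q$, so $(\sigma^{*}H)^{q+1}$ restricts to $0$ on $\widetilde D$; hence $N=0$.

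On the other side, expand via $\sigma^{*}H=d\pi^{*}H-E$ and apply the projection formula for $\pi$ to obtain
\[
N=\sum_{j=0}^{q+1}(-1)^{j}\binom{q+1}{j}d^{q+1-j}\,H^{n-1-j}\cdot \pi_{*}(E^{j}\cdot \widetilde D).
\]
The $j=0$ term equals $d^{q+1}H^{n-1}\cdot D=d^{q+1}\beta H^{n}>0$, because $\pi_{*}\widetilde D=D$. For $1\leq j\leq q+1$, the cycle $\pi_{*}(E^{j}\cdot\widetilde D)$ has dimension $n-1-j$ yet is supported in $\pi(E\cap\widetilde D)\subseteq \Ind(f)\cap D$, of dimension at most $p$; it therefore vanishes unless $n-1-j\leq p$, i.e.\ $j\geq n-1-p$. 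Combined with $j\leq q+1$, this would force $p+q\geq n-2$, contradicting (ii). Hence every $j\geq 1$ term vanishes, leaving $N=d^{q+1}\beta H^{n}>0$, in contradiction with $N=0$. Therefore $f$ must be an automorphism.

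The main obstacle in this argument is finding the correct intersection number: the exponent of $\sigma^{*}H$ must be chosen to equal $q+1$ (one more than $\dim f(D)$) so that hypothesis (ii) on the $\sigma$-side yields $N=0$, and the total codimension must be $n$ so that the surviving $j=0$ term produces the strictly positive $d^{q+1}\beta H^{n}$. The hypothesis $p+q\leq n-3$ is then exactly what is needed to annihilate every lower-order correction term through the dimension count $n-1-p>q+1$. The only routine-but-delicate verifications are that $E$ really is $\pi$-exceptional with support in $\pi^{-1}(\Ind(f))$ and that the projection-formula reductions are valid, both of which follow from the standard properties of a Hironaka resolution.
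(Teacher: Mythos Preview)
Your central computation---the intersection number $N=(\sigma^{*}H)^{q+1}\cdot(\pi^{*}H)^{n-q-2}\cdot\widetilde D$ evaluated two ways on a resolution---is correct and is a genuinely different route from the paper. The paper instead establishes the degree identity $\deg(f)^{l}=\deg(f^{-1})^{k-l}$ for every integer $l$ compatible with (ii), notes that (ii) forces at least two such values of $l$, concludes $\deg(f)=\deg(f^{-1})=1$, and then invokes a separate lemma (a birational self-map with $f^{*}H\equiv H\equiv f_{*}H$ for $H$ ample is an automorphism) to finish. Your argument bypasses the degree identity entirely; the paper's approach has the advantage of isolating Sibony's degree relation, which is of independent dynamical interest and carries over verbatim to the K\"ahler setting with currents.

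There is, however, a gap in your opening reduction. You assert that $\Ind(f^{-1})\neq\emptyset$ forces $f$ to contract an irreducible hypersurface. This is false for birational maps between smooth projective varieties in general: a flop is an isomorphism in codimension one, has nonempty indeterminacy on both sides, and contracts no divisor. What is true is that if $f$ is \emph{not} an isomorphism in codimension one, then after possibly swapping $f\leftrightarrow f^{-1}$ some divisor is contracted, and then your computation of $N$ goes through. But you must separately dispose of the case where $f$ is an isomorphism in codimension one without being an automorphism. Under hypothesis (i) this case can be excluded: strict and total transforms then agree, so $f^{*}$ acts on $\NS(M)\cong\Z$ by a unit preserving effectivity, giving $f^{*}H\equiv H\equiv f_{*}H$, and one finishes exactly by the paper's lemma (Matsusaka--Mumford type). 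Thus your argument, while it cleanly avoids the paper's two-values-of-$l$ degree computation, still implicitly leans on that final lemma for the small-map case; once you insert a sentence handling it, the proof is complete.
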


Moreover, $\Aut(M)$ is an algebraic group because the Picard number of $M$ is equal to $1$. 
As explained below, this statement provides a direct proof of the following corollary, which was our initial motivation. 
%(see \cite{}).

\begin{cor}\label{cor:sib}
Let $M_0$ be a smooth, connected, projective variety with Picard number $1$. Let $m$ be a positive integer, and $\pi_i\colon M_{i+1}\to M_i$, $i=0, \ldots, m-1$, be a sequence of blow-ups of smooth irreducible subvarieties of dimension at most $r$. 
If $\dim(M_0)>2r+2$ then the number of connected components of $\Aut(M_m)$ is finite; moreover, the projection $\pi\colon M_m\to M_0$ conjugates $\Aut(M_m)$ to a subgroup of the algebraic group $\Aut(M_0)$. \end{cor}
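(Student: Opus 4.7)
The plan is to apply Theorem \ref{thm:sib} to the birational transformation of $M_0$ induced by any automorphism of $M_m$ via the projection $\pi$. Given $f \in \Aut(M_m)$, I form $f_0 := \pi \circ f \circ \pi^{-1}$, which is a priori only a birational self-map of $M_0$. Since $f$ and $\pi$ are everywhere defined morphisms, any indeterminacy of $f_0$ must come from $\pi^{-1}$, and is therefore contained in $S := \pi(\mathrm{Exc}(\pi))$, the union of the images in $M_0$ of the centers of the iterated blow-ups. Each such center has dimension at most $r$, so $\dim S \leq r$; the same bound applies to $\Ind(f_0^{-1})$. Hence
\[
\dim \Ind(f_0) + \dim \Ind(f_0^{-1}) \leq 2r < \dim(M_0) - 2,
\]
and Theorem \ref{thm:sib}, applicable because $M_0$ has Picard number $1$, forces $f_0$ to be an automorphism of $M_0$.

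The assignment $\rho \colon f \mapsto f_0$ is a group homomorphism $\Aut(M_m) \to \Aut(M_0)$. It is injective: if $\rho(f) = \mathrm{id}_{M_0}$, then $\pi \circ f = \pi$ on the dense open set where $\pi$ is an isomorphism, which forces $f$ to restrict to the identity there, hence $f = \mathrm{id}_{M_m}$. Moreover, a dimension count shows that $f$ must permute the $\pi$-exceptional prime divisors: for any such prime $E$, the image $\pi(f(E)) = f_0(\pi(E))$ has dimension at most $r$, which is strictly less than $\dim(M_0) - 1$, ruling out the possibility that $f(E)$ is non-$\pi$-exceptional. Consequently $f_0$ preserves the subvariety $S$, and $\rho$ lands inside the stabilizer $\Aut(M_0, S) \subset \Aut(M_0)$.

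Since $M_0$ has Picard number one, $\Aut(M_0)$ is an algebraic group; the stabilizer of a subvariety is a closed algebraic subgroup, hence has finitely many connected components. The step I expect to require the most care is upgrading this observation to the finiteness of components of $\Aut(M_m)$ itself: one must verify that $\rho$ is not merely an abstract injective homomorphism but an algebraic morphism of automorphism group schemes, perhaps by iterating the descent along the whole tower $M_m \to M_{m-1} \to \cdots \to M_0$ so that $\rho$ is recognized as a closed embedding of $\Aut(M_m)$ into the stabilizer of the entire blow-up configuration inside $\Aut(M_0)$. Once this is in hand, finiteness of the connected components of $\Aut(M_m)$ follows, completing the corollary.
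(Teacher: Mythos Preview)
Your proposal is correct and follows essentially the same route as the paper: conjugate an automorphism of $M_m$ by $\pi$, bound the indeterminacy loci of the resulting birational self-map of $M_0$ by the dimension $r$ of the blow-up centers, and invoke Theorem~\ref{thm:sib}. Your additional care about injectivity of $\rho$, the landing in the stabilizer $\Aut(M_0,S)$, and the algebraicity of $\rho$ actually goes beyond the paper's own argument, which simply records that $\Aut(M_0)$ is algebraic and leaves the passage to finiteness of components of $\Aut(M_m)$ implicit.
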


For instance, if $M_0$ is the projective space (respectively a cubic hypersurface of $\P^4_{\k}$) and if one modifies $M_0$ by a finite sequence of blow-ups of points, then $\Aut(M_0)$ is isomorphic to a linear algebraic subgroup of $\PGL_4(\k)$ (respectively is finite). This provides a sharp (and strong) answer to a question of Eric Bedford. In Section \ref{par:CAut}, we provide a second, simpler proof of this last statement.

\begin{rem} The initial question of E. Bedford concerned the existence of automorphisms of compact K\"ahler manifolds with positive topological entropy in dimension $>2$. This link with dynamical systems is described, for instance, in \cite{Cantat:Pano}.
If a compact complex surface $S$ admits an automorphism with positive entropy, then $S$ is K\"ahler and is obtained from the projective plane $\P^2(\C)$, a torus,  a $K3$ surface or an Enriques surface, by a finite sequence of blow-ups (see \cite{C1,C2} and \cite{Nagata}). 
Examples of automorphisms with positive entropy are easily constructed on tori, K3 surfaces, or Enriques surfaces. 
Examples of automorphisms with positive entropy on rational surfaces are given in \cite{BK2,BKauto,McMullen}; these examples are obtained from 
birational transformations $f$ of the plane by a finite sequence of blow-ups that resolves all indeterminacies of $f$ and its iterates simultaneously. 
These results suggest to look for birational transformations of $\P^n_\C$, $n\geq 3$, that can be lifted to automorphisms with a nice dynamical 
behavior after a finite sequence of blow-ups; the above result shows that at least one center of the blow-ups must have dimension $\geq n/2-1$. 
\end{rem}

\begin{rem}
Recently, Tuyen Truong obtained results which are
similar to Corollary \ref{cor:sib}, but with hypothesis on the Hodge structure and nef classes
of $M_0$ that replace our strong hypothesis on the Picard number (see \cite{tuyen,tuyena}).
\end{rem}

\section*{Acknowledgement}
We received many interesting comments after we had posted a first version of this work on the web (see \cite{BC}). In particular, we
 thank Igor Dolgachev, Mattias Jonsson, Tuyen Truong for their valuable comments, and Brian Lehmann for stimulating correspondence. An anonymous referee explained to us how the main 
ideas of \cite{Sibony}, Prop. 2.3.2 and Rem. 2.3.3, could be applied in our setting 
to obtain Theorem 1.1 for $M=\P^k(\C)$, and  then recover the first results 
of \cite{BC}. We would like to express our gratitude to him/her.  
 
\section{Dimensions of Indeterminacy loci} 

In this section, we prove Theorem \ref{thm:sib} under a slightly more general assumption. Indeed, we replace assumption (i) with the following
assumption 
\begin{itemize}
\item[(i')] There exists an ample line bundle $L$ such that $f^*(L)\cong L^{\otimes d}$ for some $d>1.$  
%divisor $H$ such that the total transform $f^*(H)$ is numerically equivalent to a positive multiple of $H$.
\end{itemize}
This property is implied by (i). Indeed, if $M$ has Picard number $1$, the torsion-free part of the N\'eron-Severi group of $M$ is isomorphic to $\Z$, and is generated by the class $[H]$ of an ample divisor $H$. Thus, $[f^*H]$ must be a multiple of $[H]$.

In what follows, we assume that $f$ satisfies property (i') and property (ii). Replacing $H$ by a large enough multiple, we may and do assume
that $H$ is very ample. Thus, the complete linear system $\vert H\vert$ provides an embedding of $M$ into some  projective space $\P^n_\k$, and
we identify $M$ with its image in $\P^n_\k$. With such a convention, members of $\vert H\vert$ correspond to hyperplane sections of $M$.

\subsection{Degrees}  Denote by $k$ the dimension of $M$, and by $\deg(M)$ its degree, i.e. the number
of intersections of $M$ with a generic subspace of dimension $n-k$. 

If $H_1$, ..., $H_k$ are hyperplane sections of $M$, and if $f^*(H_1)$ denotes the total transform of $H_1$ under the action of $f$, one defines 
the degree of $f$ by the following intersection of divisors of $M$
\[
\deg(f)= \frac{1}{\deg(M)}f^*(H_1)\cdot H_2 \cdots H_k . 
\]
Since $M$ has Picard number $1$, we know that divisor class $[f^*(H_1)]$ is proportional to $[H]$. Our definition of $\deg(f)$ implies that 
$f^*[H_1]=\deg(f)[H_1]$. As a consequence, 
\[
f^*(H_1)\cdot f^*(H_2)\cdots f^*(H_j)\cdot H_{j+1} \cdots H_k = \deg(f)^j \deg(M)
\]
for all $0\leq j\leq k$.

\subsection{Degree bounds}\label{par:degbounds} 
Assume that the sum of the dimension of $\Ind(f)$ and of $\Ind(f^{-1})$ is at most $k-3$. Then there exist at least two integers
$l\geq 1$ such that 
\begin{eqnarray*}
\dim(\Ind(f)) & \leq & k-l-1;\\
\dim(\Ind(f^{-1})) & \leq & l-1.
\end{eqnarray*}
Let $H_1$, ..., $H_l$ and $H'_1$, ..., $H'_{k-l}$ be generic hyperplane sections of $M$; by Bertini's theorem, 
\begin{itemize}
\item[(a)] $H_1$, ..., $H_l$ intersect transversally the algebraic variety $\Ind(f^{-1})$ (in particular, $H_1\cap \ldots \cap H_l$ does
not intersect $\Ind(f^{-1})$ because $\dim(\Ind(f^{-1})) <  l$);
\item[(b)] $H'_1$, ..., $H'_{k-l}$ intersect transversally the algebraic variety $\Ind(f)$ (in particular, $H'_1\cap \ldots \cap H'_{k-l}$ does
not intersect $\Ind(f)$ because $\dim(\Ind(f)) < k- l$).
\end{itemize}
For $j\leq l$, consider the variety $V_j=f^*(H_1\cap \ldots \cap H_j)$: In the complement of $\Ind(f)$, $V_j$ is smooth, of dimension $k-j$; since 
$j\leq l$ and $\dim(\Ind(f))< k-l$, $V_j$ extends in a unique way as a subvariety of dimension $k-j$ in $M$. The varieties $V_j$ are reduced and
irreducible. 

Since each $H_i$, $1\leq i\leq l$, intersects $\Ind(f^{-1})$ transversally, $f^*(H_i)$ is an irreducible hypersurface (it does not contain any component 
of the exceptional locus of $f$). Thus 
\begin{eqnarray*}
V_j & = & f^*(H_1\cap \ldots \cap H_j)\\
& = & f^*(H_1)\cap \ldots \cap f^*(H_j)
\end{eqnarray*}
is the intersection of $j$ hypersurfaces of the same degree; for $j=l$ one gets 
\[
\deg(f)^l\deg(M)=f^*(H_1\cap \ldots \cap H_l)\cdot (H'_1\cap \cdots \cap H'_{k-l}).
\] 
More precisely, since the $H'_i$ are generic, this intersection is transversal and $V_j\cdot (H'_1\cap\ldots \cap H'_{k-l})$ is
made of $\deg(f)^l\deg(M)$ points, all of them with multiplicity $1$, all of them in the complement of $\Ind(f)$ (see property
(b) above). 

Similarly, one defines the subvarieties $V'_j=f_*(H'_1\cap \ldots H'_j)$ with $j\leq k-l$; as above, these subvarieties
have dimension $k-j$, are smooth in the complement of $\Ind(f^{-1})$, and uniquely extend to 
varieties of dimension $k-j$ through $\Ind(f^{-1})$. Each of them is equal to the intersection of the 
$j$ irreducible divisors $f_*(H_i)$, $1\leq i\leq j$. Hence,
\[
(H_1\cap \ldots \cap H_l)\cdot V'_{k-l} = \deg(f^{-1})^{k-l}\deg(M).
\]

If one applies the transformation $f\colon M\setminus \Ind(f)\to M$ to  $V_l$ and to $(H'_1\cap \cdots \cap H'_{k-l})$, one deduces that $\deg(f)^l\deg(M)\leq \deg(f^{-1})^{k-l}\deg(M)$, 
because all points of intersection of $V_l$ with $(H'_1\cap \ldots \cap H'_{k-l})$ are contained in the complement of $\Ind(f)$. 
Applied  to $f^{-1}$, the same argument provides the opposite inequality. Thus, 
\[
\deg(f)^l=\deg(f^{-1})^{k-l}
\]
Since there are at least two distinct values of $l$ for which this equation is satisfied, one concludes that 
\[
\deg(f)=\deg(f^{-1})=1.
\]
As a consequence, $f$ has degree $1$ if it satisfies  assumptions (i') and (ii), . 

\subsection{From birational transformations to automorphisms} To conclude the proof of Theorem \ref{thm:sib}, one applies the following lemma. 

\begin{lem}\label{lem:bir-to-aut}
Let $M$ be a smooth projective variety and $f$ a birational transformation of $M$. 
If there exists an ample divisor $H$ such that $f^*H$ and $f_*(H)$ are numerically equivalent to $H$, then
$f$ is an automorphism. 
\end{lem}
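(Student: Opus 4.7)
The plan is to pass to a common resolution of the graph of $f$ and use the hypothesis to pin down the two pullbacks of $H$. Let $\pi,\sigma\colon\tilde M\to M$ be birational morphisms from a smooth projective variety $\tilde M$ with $\sigma=f\circ\pi$ on the open locus where both sides are defined; such $\tilde M$ exists by resolving the singularities of the closure of the graph of $f$ in $M\times M$. The aim is to show that $\sigma$ factors through $\pi$ as a regular morphism $M\to M$; the symmetric argument applied to $f^{-1}$ will then supply a regular inverse.

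First I would establish the numerical identity $\pi^*H\equiv\sigma^*H$ on $\tilde M$. By the projection formula applied to $\pi$,
\[
\sigma^*H\cdot\pi^*H^{n-1}=\pi_*(\sigma^*H)\cdot H^{n-1}=f^*H\cdot H^{n-1}=H^n,
\]
where $n=\dim M$ and the last step uses $f^*H\equiv H$. Applying the projection formula to $\sigma$ and using $f_*H\equiv H$ yields the symmetric identity $\pi^*H\cdot\sigma^*H^{n-1}=H^n$, and birationality of $\pi,\sigma$ gives $(\pi^*H)^n=(\sigma^*H)^n=H^n$. With $\alpha=\pi^*H$ and $\beta=\sigma^*H$ big and nef, the Khovanskii--Teissier inequality $(\alpha^{n-1}\cdot\beta)^n\geq(\alpha^n)^{n-1}\beta^n$ therefore becomes the equality $(H^n)^n=(H^n)^n$; its equality case forces $\alpha$ and $\beta$ to be proportional in $N^1(\tilde M)_{\mathbf{R}}$, and the common value of their top self-intersections then forces $\alpha\equiv\beta$.

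Once $\pi^*H\equiv\sigma^*H$ is in hand, the two morphisms contract exactly the same curves: for an irreducible curve $C\subset\tilde M$, ampleness of $H$ gives $\pi^*H\cdot C=0$ if and only if $\pi(C)$ is a point, and the analogous statement holds for $\sigma$. Fibers of $\pi$ are connected (Zariski's main theorem) and any two points of a connected projective variety are joined by a chain of irreducible curves, so $\sigma$ is constant on every fiber of $\pi$. Together with the equality $\pi_*\mathcal{O}_{\tilde M}=\mathcal{O}_M$, which holds because $\pi$ is proper birational and $M$ is normal, this shows that $\sigma$ descends to a regular morphism $g\colon M\to M$ with $\sigma=g\circ\pi$. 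On the dense open locus where $\pi$ is an isomorphism one has $g=f$, so $g$ regularly extends $f$; running the same argument with the roles of $\pi$ and $\sigma$ swapped extends $f^{-1}$ as well, and $f$ is an automorphism.

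The point I expect to demand the most care is the equality case of the Khovanskii--Teissier inequality for big and nef (rather than ample) classes: the inequality itself is classical, but the characterization of equality as proportionality in this generality relies on later refinements such as those of Boucksom--Favre--Jonsson. If one wished to bypass that machinery, the class $D=\pi^*H-\sigma^*H$ satisfies $D\cdot\pi^*H^{n-1}=0$ and $D\cdot\sigma^*H^{n-1}=0$, and one could apply the Hodge index theorem on $\tilde M$ to $D$ directly after perturbing $\pi^*H$ into an ample class using a small multiple of an auxiliary ample divisor on $\tilde M$.
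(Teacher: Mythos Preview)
Your argument is correct, but the route is genuinely different from the paper's. The paper replaces $H$ by a very ample multiple, works on the (unresolved) graph $Z\subset M\times M$, and argues by contradiction: if the second projection contracts a curve $C$ to a point $q$, then a generic member $H_0\in|H|$ avoids $q$, so $\pi_2^*H_0$ misses $C$; since $f^*$ carries $|H|$ onto another complete very ample system $|H'|$, the pushforward $f^*H_0=(\pi_1)_*\pi_2^*H_0$ is a generic member of $|H'|$ and hence an ample divisor that misses $\pi_1(C)$, a contradiction. This is entirely elementary---no Khovanskii--Teissier, no Hodge index---at the price of arguing with actual linear systems rather than numerical classes.

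Your approach trades that geometric picture for intersection theory: you deduce $\pi^*H\equiv\sigma^*H$ from the equality case of Khovanskii--Teissier for big and nef classes, then use the rigidity/Stein-factorisation package to descend $\sigma$ through $\pi$. The gain is that the argument lives entirely in $N^1(\tilde M)$ and ports verbatim to the K\"ahler setting; the cost is that the equality case you invoke is not elementary and really does need a reference such as Boucksom--Favre--Jonsson, as you note. Your fallback sketch via the Hodge index theorem is headed in the right direction but is not quite complete as written: once you perturb $\pi^*H$ to an ample class $\pi^*H+\epsilon A$, the orthogonality $D\cdot(\pi^*H)^{n-1}=0$ is lost, so one has to also track $D^2\cdot(\pi^*H)^{n-2}$ (which you can show vanishes using log-concavity of the mixed volumes $s_i=\alpha^{n-i}\beta^i$) and then argue in the limit; this works but takes a few more lines than you indicate. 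An alternative that avoids BFJ entirely is the Negativity Lemma: the $\pi$-exceptional class $E=\pi^*(f^*H)-\sigma^*H$ has $-E$ $\pi$-nef (since $\sigma^*H$ is nef and $\pi^*(f^*H)$ is $\pi$-trivial), hence $E\geq 0$; by symmetry $E'=\sigma^*(f_*H)-\pi^*H\geq 0$, and $E+E'\equiv 0$ forces $E=E'=0$, giving $\sigma^*H\equiv\pi^*H$ directly.
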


\begin{proof}
Taking multiples, we assume that $H$ is very ample.
Consider the graph $Z$ of $f$ in $M\times M$, together with its two natural projections $\pi_1$ and
$\pi_2$ onto $M$. 

The complete linear system $\vert H\vert$ is mapped by $f^*$ to a linear system $\vert H'\vert$ with the
same numerical class, and vice versa if one applies $f^{-1}$ to $\vert H'\vert$. Thus, $\vert H'\vert$ is also a complete
linear system, of the same dimension. Both of them are very ample (but they may differ if the dimension of $\Pic^0(M)$ is positive).

Assume that $\pi_2$ contracts a curve $C$ to a point $q$. 
Take a generic member $H_0$ of $\vert H\vert $: It does not intersect $q$, and $\pi_2^*(H_0)$ does not intersect $C$. 
The projection $(\pi_1)_*(\pi_2^*(H_0))$ is equal to $f^*(H_0)$; since $f^*$ maps the complete linear system 
$\vert H\vert$ to the complete linear system $\vert H'\vert$ and $H_0$ is generic, we may assume that $f^*(H_0)$ is a generic member of $\vert H'\vert$.
As such, it does not intersect the finite set $\pi_1(C)\cap\Ind(f)$. Thus, there is no fiber of $\pi_1$ that intersects simultaneously $C$ and $(\pi_2)^*(H_0)$, and $(\pi_1)_*(\pi_2^*(H_0))$ does not intersect $C$. This contradicts the fact that $f^*(H_0)$ is ample.
\end{proof}

\subsection{Conclusion, and K\"ahler manifolds} Under the assumptions of Theorem~\ref{thm:sib}, Section \ref{par:degbounds} shows that $f^*H$ is 
numerically equivalent to $H$. Lemma~\ref{lem:bir-to-aut} implies that $f$ is an automorphism. This concludes the proof of 
Theorem~\ref{thm:sib}. 

This proof is inspired by an argument of Sibony in \cite{Sibony} (see Proposition 2.3.2 and Remark 2.3.3); which makes use of complex
analysis: the theory of closed positive current, and intersection theory. With this viewpoint, one gets the following statement. 

\begin{thm}
Let $M$ be a compact K\"ahler manifold and $f$ a bi-meromorphic transformation of $M$. 
Assume that 
\begin{itemize}
\item[(i)] there exists a K\"ahler form $\omega$ such that the cohomology class of $f^*\omega$ is proportional to the cohomology class of $\omega$;
\item[(ii)] the indeterminacy locus of $f$ and its inverse satisfy
\[
\dim(\Ind(f))+\dim(\Ind(f^{-1}))<\dim(M)-2.
\]
\end{itemize}
Then $f$ is an automorphism of $M$ that fixes the cohomology class of $\omega$.
\end{thm}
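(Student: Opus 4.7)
The strategy is to adapt Sibony's proof of Theorem~\ref{thm:sib} to the Kähler setting, replacing intersections of divisors by the Bedford--Taylor--Demailly intersection theory of closed positive currents. Write $k=\dim_{\C} M$ and let $\lambda>0$ satisfy $[f^{*}\omega]=\lambda[\omega]$. The plan is first to establish $\lambda=1$, and then to invoke a Kähler analog of Lemma~\ref{lem:bir-to-aut}.

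By hypothesis~(ii), at least two distinct positive integers $l$ satisfy both $\dim\Ind(f)\le k-l-1$ and $\dim\Ind(f^{-1})\le l-1$. Fix such an $l$, and a smooth resolution $\pi_{1},\pi_{2}\colon Z\to M$ of the graph of $f$, so that $\pi_{2}=f\circ\pi_{1}$ off the exceptional loci. On $Z$, the class $\pi_{2}^{*}[\omega]-\lambda\,\pi_{1}^{*}[\omega]$ lies in the kernel of $(\pi_{1})_{*}$, and is therefore a linear combination of $\pi_{1}$-exceptional divisors whose $\pi_{1}$-images are contained in $\Ind(f)$. Expanding $(\pi_{2}^{*}\omega)^{l}$ and pushing forward by $\pi_{1}$, the closed positive $(l,l)$-current $(f^{*}\omega)^{l}:=(\pi_{1})_{*}((\pi_{2}^{*}\omega)^{l})$ on $M$ has cohomology class equal to $\lambda^{l}[\omega]^{l}$: the ``error'' between the two is representable by a closed $(l,l)$-current supported on $\Ind(f)$, a subvariety of complex dimension less than $k-l$; since any smooth $(k-l,k-l)$-form vanishes identically on a complex subvariety of strictly smaller dimension, this error pairs to zero with every such form, and thus represents the zero cohomology class. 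Consequently,
\[
\int_{M}(f^{*}\omega)^{l}\wedge\omega^{k-l}=\lambda^{l}\int_{M}\omega^{k}.
\]

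On the full-measure open subset of $M$ where $f$ is a biholomorphism onto its image, the change of variables $y=f(x)$ rewrites the above integral as $\int_{M}\omega^{l}\wedge(f_{*}\omega)^{k-l}$. Applying the same reasoning to $f^{-1}$---using $\dim\Ind(f^{-1})<l$ together with the proportionality $[f_{*}\omega]=\lambda^{-1}[\omega]$, which is forced by the adjunction relation $[f_{*}\omega]\cdot X=[\omega]\cdot f^{*}X$ and the analogous vanishing of correction classes---one obtains $\int_{M}\omega^{l}\wedge(f_{*}\omega)^{k-l}=\lambda^{-(k-l)}\int_{M}\omega^{k}$. For two distinct admissible values of $l$, the resulting equation $\lambda^{l}=\lambda^{-(k-l)}$ forces $\lambda^{k}=1$, and hence $\lambda=1$ since $\lambda>0$. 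To finish, the Kähler analog of Lemma~\ref{lem:bir-to-aut} applies: had $f$ (respectively $f^{-1}$) contracted a curve $C$, then combining $f_{*}[C]=0$ (respectively $(f^{-1})_{*}[C]=0$), the adjunction $f^{*}[\omega]\cdot[C]=[\omega]\cdot f_{*}[C]$, and the equalities $f^{*}[\omega]=[\omega]$ and $[f_{*}\omega]=[\omega]$ just obtained, one would deduce $[\omega]\cdot[C]=0$, contradicting the Kählerness of $\omega$. Hence neither $f$ nor $f^{-1}$ contracts any curve, so $\Ind(f)$ is empty and $f$ is an automorphism of $M$ preserving $[\omega]$.

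The principal difficulty lies in the careful bookkeeping of correction classes supported on the indeterminacy loci, particularly in the identification $[f_{*}\omega]=\lambda^{-1}[\omega]$ used above: this requires the symmetric analysis of $\pi$-exceptional divisors for $f^{-1}$, mirroring the corresponding step for $f^{*}\omega$, and crucially relies on the sharp dimension inequality of hypothesis~(ii).
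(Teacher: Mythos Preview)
Your overall strategy---transporting Sibony's current-theoretic argument to the K\"ahler setting---is exactly what the paper intends, and your treatment of the identity $[(f^*\omega)^l]=\lambda^l[\omega]^l$ via the graph resolution and the dimension bound on $\Ind(f)$ is on the right track. However, there is a genuine error at the heart of the argument.

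The claim $[f_*\omega]=\lambda^{-1}[\omega]$ is false in general, and the justification you offer (projection formula plus ``vanishing of correction classes'') does not establish it. The projection formula gives $f_*[\omega]\cdot X=[\omega]\cdot f^*X$, but $f^*$ acting on $H^{k-1,k-1}$ is \emph{not} multiplication by $\lambda^{k-1}$ unless $\Ind(f)$ is zero-dimensional; the correction classes you need to vanish here are of every bidegree up to $(k-1,k-1)$, and hypothesis~(ii) does not control them. Concretely, for the standard quadratic involution of $\P^2$ one has $\lambda=2$ and $[f_*\omega]=2[\omega]$, not $\tfrac12[\omega]$. What one should write instead is $[f_*\omega]=\mu[\omega]$, where $\mu\geq 1$ is the first degree of $f^{-1}$ (and even this proportionality is an extra assumption beyond~(i), implicit when $h^{1,1}=1$). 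The correct identity is then $\lambda^l=\mu^{\,k-l}$, and it is precisely the availability of \emph{two} distinct admissible values of $l$ that forces $\lambda=\mu=1$, exactly as in the paper's algebraic argument. Your equation $\lambda^l=\lambda^{-(k-l)}$ collapses to $\lambda^k=1$ for every $l$, so the hypothesis of two values of $l$ plays no role in your version---a sign that something has gone wrong.

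A smaller point: in your final paragraph, the adjunction you invoke for a contracted curve $C$ is $\pi_2^*[\omega]\cdot[\tilde C]=[\omega]\cdot f_*[C]$ on the graph, not $f^*[\omega]\cdot[C]=[\omega]\cdot f_*[C]$ on $M$; one must argue on $Z$ and use that $\pi_2^*\omega$ is semipositive, paralleling the paper's proof of Lemma~\ref{lem:bir-to-aut}.
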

Moreover, Lieberman's theorem (see \cite{Lieberman}) implies that a positive iterate $f^m$ of $f$ is contained in the connected
component of the identity of the complex Lie group $\Aut(M)$. 

\subsection{Proof of Corollary \ref{cor:sib}}  Since $M_m$ is obtained from $M_0$ by a sequence of blow-ups of centers
of dimension $<\dim(M_m)/2-1$, all automorphisms $f$ of $M_m$ are conjugate, through the obvious birational morphism $\pi\colon M_m\to M_0$,
to birational transformations of $M_0$ that satisfy
\[
\dim(\Ind(f))<\dim(M_0)/2-1 \; {\text{and}} \; \dim(\Ind(f^{-1}))<\dim(M_0)/2-1.
\]
Thus, by Theorem~\ref{thm:sib} $\pi$ conjugates $\Aut(M)$ to a subgroup of $\Aut(M_0)$. Moreover, given any polarization of $M_0$ by a  very ample class, all elements of $\Aut(M_0)$ have degree $1$ with respect to this polarization. Hence, $\Aut(M_0)$ is an algebraic group, and the kernel of the
action of $\Aut(M_0)$ on $\Pic^0(M_0)$ is a linear algebraic group; if $\Pic^0(M_0)$ is trivial, there is a projective embedding of $\Theta\colon M_0\to
\P^n_\k$ that conjugates $\Aut(M_0)$ to the group of linear projective transformations $G\subset \PGL_{n+1}(\k)$ that preserve $\Theta(M)$.

%%%
\section{Constraints on automorphisms from the structure of the intersection form} \label{par:CAut}
%%%
Let $X$ be a smooth projective variety of dimension $k$ over a field $\k$. Denote by $\NS(X)$ the N\'eron-Severi group of $X$, {\sl{i.e.}} the group of classes of divisors for the numerical equivalence relation. We consider the multi-linear forms 
$$Q_d\colon \NS(X)^d\to \Z$$ which are defined by
\[
Q_d(u_1,u_2,\dots,u_d)=u_1\cdot u_2 \cdots u_d\cdot K_X^{k-d}.
\]
These forms are invariant under $\Aut(X)^*$ and we shall derive new constraints on the size of $\Aut(X)^*$ from this invariance.

\begin{thm} \label{smooth}  
 Let $X$ be a smooth projective variety of dimension $k\geq 3$, defined over a field $\k$. Let $d$ be an integer that satisfies $3\leq d\leq k$. If the projective variety 
 \[
 W_d(X):=\{u\in \P(\NS(X)\otimes_{\Z}\C)|\ Q_d(u,u,\dots,u)=0\}
 \]
 is smooth, then $\Aut(X)^*$ is finite.
 \end{thm}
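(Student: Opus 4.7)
The multilinear form $Q_d$ is manifestly invariant under the natural action of $\Aut(X)^*$ on $\NS(X)$: automorphisms preserve the canonical class $K_X$ and are compatible with the intersection product, so $Q_d(\varphi^*u_1,\dots,\varphi^*u_d)=Q_d(u_1,\dots,u_d)$ for every $\varphi\in\Aut(X)$. Set $V:=\NS(X)\otimes_{\Z}\C$. Then the linear representation $\Aut(X)^*\hookrightarrow\GL(V)$ preserves the homogeneous degree-$d$ polynomial $u\mapsto Q_d(u,\dots,u)$, and hence its image in $\PGL(V)$ preserves the projective hypersurface $W_d(X)\subset\P(V)$.

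The main ingredient will be the classical fact that \emph{a smooth hypersurface of degree $d\geq 3$ in a projective space has finite projective automorphism group} (Matsumura--Monsky). The standard proof shows that the Lie algebra of the subgroup of $\PGL(V)$ stabilizing such a hypersurface is trivial: starting from $A\in\mathfrak{gl}(V)$ satisfying $A\cdot F=\lambda F$ for the defining form $F$, one combines the Euler identity $\sum x_i\,\partial_i F=dF$ with the fact that smoothness of $W_d(X)$ means the partials $\partial_i F$ have no common zero in $\P(V)$, to force $A$ to be a scalar. Therefore the image of $\Aut(X)^*$ in $\PGL(V)$ is a discrete subgroup of an algebraic group with finite set of connected components and trivial identity component, hence is itself finite.

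To conclude, it remains to control the kernel of $\Aut(X)^*\to\PGL(V)$. Any element of this kernel acts on $V$ as multiplication by a scalar $\lambda\in\C^*$, but must also preserve the integral lattice $\NS(X)\subset V$; only the scalars $\pm 1$ do so. Thus the kernel has order at most $2$, and combining with the previous step we obtain the finiteness of $\Aut(X)^*$.

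The substantial step is clearly the appeal to the Matsumura--Monsky-type theorem, and the only care required is in low rank: when $\dim V=1$ there is nothing to prove since $\Aut(X)^*\subset\{\pm 1\}$, and when $\dim V=2$ the hypersurface $W_d(X)$ is a set of $d$ distinct points in $\P^1$ (smoothness means reducedness), whose stabilizer in $\PGL_2$ is already finite as soon as $d\geq 3$. No additional difficulty appears: the $\Aut(X)^*$-invariance of $Q_d$ is immediate from the definition, and the lattice argument at the end is routine.
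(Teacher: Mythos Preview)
Your proof is correct and follows essentially the same route as the paper: invoke the $\Aut(X)^*$-invariance of $Q_d$, apply the Matsumura--Monsky theorem to conclude that the image of $\Aut(X)^*$ in $\PGL(V)$ is finite, and then observe that the kernel consists of scalars preserving the lattice $\NS(X)$, hence is contained in $\{\pm 1\}$. Your treatment is in fact slightly more careful, as you spell out the low-rank cases $\dim V\leq 2$ and the reason $K_X$ is $\Aut(X)$-invariant, both of which the paper leaves implicit.
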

 \begin{proof}
 The group $\Aut(X)^*$ acts by linear projective transformations on the projective space $\P(\NS(X)\otimes_{\Z}\C)$ and preserves the smooth hypersurface $W_d.$ Since $d\geq 3$ it follows from \cite{MM} that the group of linear projective transformations preserving a smooth hypersurface of degree $d$ is finite. Hence, there is a finite index subgroup $A$ of $\Aut(X)^*$ which is contained in the center of $\GL(\NS(X))$; since the later is a finite group of homotheties, this finishes the proof.
 \end{proof}
 
As a corollary, let us state the following one, already obtained in the previous sections:
 \begin{cor}
 Let $X$ be a smooth projective variety of dimension $k\geq 3.$ Assume that there exists a birational morphism 
 $\pi:X\to V$ such that 
 \begin{itemize}
 \item the Picard number of $V$ is equal to 1
 \item $\pi^{-1}$ is the blow-up of $l$ distinct points of $V.$
 \end{itemize}
 
 Then $\Aut(X)^*$ is a finite group.
\end{cor}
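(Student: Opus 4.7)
My plan is to apply Theorem~\ref{smooth} with $d = k$, which is allowed since $k \geq 3$. It therefore suffices to verify that the top self-intersection hypersurface $W_k(X) \subset \P(\NS(X) \otimes_\Z \C)$ is smooth.

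To that end I would choose an ample generator $H$ of $\NS(V)_\Q$ (which exists because $V$ has Picard number one), let $E_1, \ldots, E_l$ denote the exceptional divisors of $\pi$, and work in the basis $\{h, E_1, \ldots, E_l\}$ of $\NS(X) \otimes_\Z \C$, where $h := \pi^*H$. The key point is that this basis diagonalizes the top intersection form. Indeed, since the $l$ centers are distinct points, $E_i \cap E_j = \emptyset$ for $i \neq j$, so every monomial involving two distinct exceptional divisors has vanishing intersection. Since $\pi$ contracts each $E_i$ to a point, the projection formula yields $h^a \cdot E_i^{k-a} = 0$ for every $a \geq 1$. The only surviving top intersections are $h^k = \deg(V) > 0$ and the standard identity $E_i^k = (-1)^{k-1}$ coming from $E_i \cong \P^{k-1}$ with normal bundle $\mathcal{O}_{\P^{k-1}}(-1)$.

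Writing $u = a h + \sum_{i=1}^l b_i E_i$, the computation above gives
$$Q_k(u, \ldots, u) = u^k = \deg(V)\, a^k + (-1)^{k-1} \sum_{i=1}^l b_i^k,$$
a diagonal degree-$k$ form in $l+1$ variables. Its gradient $\bigl(k \deg(V)\, a^{k-1}, (-1)^{k-1} k\, b_1^{k-1}, \ldots, (-1)^{k-1} k\, b_l^{k-1}\bigr)$ vanishes only at the origin, so the Jacobian criterion shows that $W_k(X)$ is a smooth hypersurface. Theorem~\ref{smooth} then gives the finiteness of $\Aut(X)^*$. The degenerate case $l = 0$ is trivial: then $\NS(X)_\Q$ has rank one and $\Aut(X)^*$ preserves both the integral lattice and the ample cone, so its image in $\GL(\NS(X))$ is finite.

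The only substantive step is the intersection-number computation, which is an immediate consequence of the projection formula and of the normal bundle of a point blow-up; I therefore expect no serious obstacle beyond unwinding these standard identities.
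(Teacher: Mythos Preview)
Your proposal is correct and follows exactly the paper's approach: compute the top intersection form $Q_k$ in the basis $\{\pi^*H, E_1,\ldots,E_l\}$, observe it is a diagonal (Fermat-type) form with nonzero coefficients, conclude that $W_k(X)$ is smooth, and invoke Theorem~\ref{smooth}. You supply more detail than the paper does (the projection-formula justification for the vanishing cross terms, the explicit Jacobian check, and the degenerate case $l=0$), but the argument is the same.
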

\begin{proof}
We identify $\NS(V)$ with $\Z e_0$ where $e_0$ is the class of an ample divisor. Let $a:=e_0^k.$ Since $X$ is obtained from $V$ by blowing up $l$ distinct points $p_1,\dots,p_l$ we have 
\[
\NS(X)=\Z e_0+\bigoplus_{1\leq i\leq l}\Z e_i
\]
where $e_i$ is the class of the exceptional divisor $E_i:=\pi^{-1}(p_i).$ Then the form $Q_k$ is given by 
\[
Q_k(u)=a(X_0)^k+(-1)^{k+1}\sum_{i=1}^l(X_i)^k
\]
where $u=X_0e_0+\sum_i X_ie_i$ and $[X_0:\dots X_l]$ denotes the homogeneous coordinates on $\P(\NS(X)\otimes_{\Z}\C).$ 
Hence, the projective variety defined by $Q_k$ in $\P(\NS(X)\otimes_{\Z}\C)$ is smooth and $\Aut(X)^*$ is finite. 
\end{proof}

%Next results is related to Lieberman's Theorem: Let $X$ be as above and $A$ be a fixed ample divisor on $X$. We define 
%$$Aut_A(X):=\{f\in \Aut(X): f^*A=A\}.$$
%\begin{cor}
%The image of the group of $Aut_A(X)$ in $GL(NS(X))$ is finite.
%\end{cor} 
%\begin{proof}
%Fix $3\leq d\leq k$ and let $W_d$ be this projective variety defined by the form $$Q_d(u)=u^d\cdot A^{k-d}.$$ 
%If $W_d$ is not smooth then there exists a $u\neq0$ in $NS(X)$ such that $Q_d(u,u_1,\dots,u_{d-1})=0$ for every $u_1,\dots,u_{r-1}.$ However, $A$ is ample and $u\cdot A^{k-1}\neq0.$ Thus, $W_d$ is smooth and by Theorem \ref{smooth}  $Aut_A(X)^*$ is finite.  
%\end{proof}

%%%%%%%%%%%%%%%%%%%%%%%%%%%%%%%%%%%%%%%%%%%% 


\begin{thebibliography}{BHPV}


\bibitem[BC12]{BC}
T.~Bayraktar and S.~Cantat.
\newblock Constraints on automorphism groups of higher dimensional manifolds
\newblock  arXiv:1212.3735v1


\bibitem[BK06]{BK2}
E.~Bedford and K.~Kim.
\newblock Periodicities in linear fractional recurrences: degree growth of
  birational surface maps.
\newblock {\em Michigan Math. J.}, 54(3):647--670, 2006.

\bibitem[BK10]{BKauto}
E.~Bedford and K.~Kim.
\newblock Continuous families of rational surface automorphisms with positive
  entropy.
\newblock {\em Math. Ann.}, 348(3):667--688, 2010.
 
\bibitem[Can99]{C2}
S.~Cantat.
\newblock Dynamique des automorphismes des surfaces projectives complexes.
\newblock {\em C. R. Acad. Sci. Paris S\'er. I Math.}, 328(10):901--906, 1999.

\bibitem[Can01]{C1}
S.~Cantat.
\newblock Dynamique des automorphismes des surfaces {$K3$}.
\newblock {\em Acta Math.}, 187(1):1--57, 2001.


\bibitem[Can11]{Cantat:Pano}
S.~Cantat.
\newblock Quelques aspects des syst\`emes dynamiques polynomiaux : existence, exemples et rigidit\'e
\newblock Panorama et Synth\`ese (30), Soci\'et\'e Math\'ematique de France, 2011


\bibitem[Ful98]{Fulton}
W.~Fulton.
\newblock {\em Intersection theory}, volume~2 of {\em Ergebnisse der Mathematik
  und ihrer Grenzgebiete. 3. Folge. A Series of Modern Surveys in Mathematics
  [Results in Mathematics and Related Areas. 3rd Series. A Series of Modern
  Surveys in Mathematics]}.
\newblock Springer-Verlag, Berlin, second edition, 1998.

\bibitem[GH94]{GH}
P.~Griffiths and J.~Harris.
\newblock {\em Principles of algebraic geometry}.
\newblock Wiley Classics Library. John Wiley \& Sons Inc., New York, 1994.
\newblock Reprint of the 1978 original.


\bibitem[KSC04]{KSC}
J.~Koll{\'a}r, J{\'a}nos, K.~Smith, and A.~Corti 
\newblock {\em Rational and nearly rational varieties}.
\newblock Cambridge Studies in Advanced Mathematics, 2004.
\newblock vol. 92, vi+235.


\bibitem[Lie78]{Lieberman}
D.~I. Lieberman.
\newblock Compactness of the {C}how scheme: applications to automorphisms and
  deformations of {K}\"ahler manifolds.
\newblock In {\em Fonctions de plusieurs variables complexes, {III} ({S}\'em.
  {F}ran\c cois {N}orguet, 1975--1977)}, volume 670 of {\em Lecture Notes in
  Math.}, pages 140--186. Springer, Berlin, 1978.

\bibitem[McM07]{McMullen}
C.~T. McMullen.
\newblock Dynamics on blowups of the projective plane.
\newblock {\em Publ. Math. Inst. Hautes \'Etudes Sci.}, (105):49--89, 2007.

\bibitem[MaMo64]{MM}
H.~Matsumura and P.~Monsky.
\newblock On the automorphisms of hypersurfaces.
\newblock {\em J. Math. Kyoto Univ.}, 3:347--361, 1963/1964.


\bibitem[MaMu64]{MaMu}
T.~Matsusaka and D.~Mumford
\newblock Two fundamental theorems on deformations of polarized
              varieties.
\newblock {\em Amer. J. Math.}, 86:668--684, 1964.


\bibitem[Nag61]{Nagata}
M.~Nagata.
\newblock On rational surfaces. {II}.
\newblock {\em Mem. Coll. Sci. Univ. Kyoto Ser. A Math.}, 33:271--293,
  1960/1961.
  
  \bibitem[Sib99]{Sibony}
N.~Sibony.
\newblock  Dynamique des applications rationnelles de {$\bold P^k$}.
\newblock {\em Panor. Synth\`eses}, vol. 8, pages 97--185,
  1999.

\bibitem[Tru12]{tuyen}
T.~T. Truong.
\newblock On automorphisms of blowups of $\mathbb{P}^3$.
\newblock {\em arXiv:1202.4224}, 2012.

\bibitem[Tru13]{tuyena}
T.~T. Truong.
\newblock On automorphisms of blowups of projective manifolds.
\newblock {\em arXiv:1301.4957}, 2013.

\end{thebibliography}
\end{document}